\documentclass[a4paper, reqno,10pt]{amsart}

\usepackage{amsmath,amsthm,amssymb,mathtools,amscd}
\usepackage{latexsym,graphicx,color,indentfirst}
\usepackage[linesnumbered,boxed,norelsize]{algorithm2e}
\usepackage{fancyhdr}
\usepackage{mathrsfs}

\makeatletter \@addtoreset{equation}{section} \makeatother
\setlength{\parindent}{1em}

\newtheorem{theorem}{Theorem}[section]
\newtheorem{lemma}[theorem]{Lemma}
\newtheorem{assumption}[theorem]{Assumption}

\newtheorem{proposition}[theorem]{Proposition}

\newtheorem{remark}[theorem]{Remark}

\def\yd{y^\delta}
\def\D{\mathcal D}
\def\d{\delta}

\def\geq{\geqslant}
\def\ge{\geqslant}

\def\l{\left}
\def\le{\leqslant}
\def\leq{\leqslant}
\def\r{\right}

\def\p{\partial}
\def\d{\delta}

\def\l{\langle}
\def\r{\rangle}

\def\X{\mathcal X}
\def\Y{\mathcal Y}

\def\la{\lambda}

\def\p{\partial}
\def\d{\delta}

\def\l{\langle}
\def\r{\rangle}

\def\d{\delta}
\def\la{\lambda}
\def\X{\mathcal X}
\def\Y{\mathcal Y}

\title[ ]{On the asymptotical regularization with convex constraints  for inverse problems }
\author{ Min Zhong}
\address{School of Mathematics, Southeast University, Nanjing, Jiangsu 210096, P. R. China}
\email{\tt min.zhong@seu.edu.cn}
\author{Wei Wang}
\address{Corresponding author. College of Mathematics, Physics and Information Engineering, Jiaxing University,
Zhejiang 314001, P. R. China}
\email{\tt weiwang@zjxu.edu.cn}



%
%

\date{\today}

\begin{document}

\begin{abstract}
In this paper, we consider the asymptotical regularization with convex constraints
for nonlinear ill-posed problems. The method allows to use non-smooth penalty terms, including the
L1-like and the total variation-like penalty functionals, which are significant in reconstructing special features of solutions such as sparsity and piecewise constancy.
 Under certain conditions we give convergence
properties of the methods.
Moreover, we propose Runge-Kutta type methods to discrete  the initial value problems to construct new type iterative regularization methods.
\end{abstract}

\maketitle

\section{Introduction}
Consider the ill-posed operator equation
\begin{equation}\label{1.1}
F(x) = y\,,
\end{equation}
where $F: \mathcal{D}(F)\subset \X\rightarrow \Y$ is a nonlinear operator, $\X,\Y$ are infinite dimensional  Hilbert spaces.
Instead of exact data $y$, in practice we are given only noisy data $y^\d$ satisfying
\begin{equation}\label{1.2}
\|y-y^\d\|\leq \d
\end{equation}
with known noise level $\d$. 

In order to find the solution of \eqref{1.1} with desired feature such as sparsity and piecewise constancy,
 one may introduce non-smooth penalty functions, such as the $L^1$ and total variation like
functions \cite{bt09,bt09(2)}. In recent years, the relative iterative regularization has received tremendous attention,
while
Landweber type iteration is one of the most fundamental methods \cite{bh12,jw13}. 

For a proper, lower semi-continuous,
uniformly convex function as penalty term $\Theta: \X \to (-\infty,+\infty) $ given the initial pairs $(\xi_0^\d,x_0^\d) = (\xi_0,\nabla \Theta^*(\xi_0))$, the Landweber iteration with penalty $\Theta$ takes
the form
  \begin{equation}\label{land}
  \begin{aligned}
\xi_{n+1}^\d&=\xi_n^\d-\mu_n^\d F'(x_n^\d)^*(F(x_n^\d)-y^\d)\,,\\
x_{n+1}^\d &= \nabla \Theta^*( \xi_{n+1}^\d)\,,\quad n=0,1,2,\ldots
\end{aligned}
  \end{equation}
where $F'(\cdot)^*$ denotes the adjoint of $F'$, $\mu_n^\d$ is the step-size and $\Theta^*:\X\to (-\infty, \infty]$ is the Legendre-Fenchel conjugate of $\Theta$.
The advantage of this method is the freedom on the choice of $\Theta$  so that the method can be utilized in detecting different features of the sought solution.  Extensive researches have been carried out to study above Landweber-type iteration, its  accelerated version and its modification. The  convergence analysis  of the iterative methods has been given under discrepancy principle or Heuristic choice rule \cite{jin16,ms16,ghc19,zwj19,rj20,zw2020,zqw21}.

In this paper, we investigate the methods of asymptotical regularization, which can be considered as the continuous analogue of iteration \eqref{land}. By introducing an artificial time variable, a regularized approximation pair $(\xi^\d(T),x^\d(T))$
is obtained by solving the initial value problem
 \begin{equation}\label{asym1}
  \begin{aligned}
\frac{d\xi^\d(t)}{dt}& = - F'(x^\d(t))^*(F(x^\d(t))-y^\d), \quad 0<t\leq T, \\
x^\d(t) &= \nabla \Theta^*(\xi^\d(t)),\\
\end{aligned}
  \end{equation}
  with initial value $(\xi^\d(0), x^\d(0)) = (\xi_0,\nabla \Theta^*(\xi_0))$.
  Note that, when take $\Theta(x) = \frac{1}{2}\|x\|^2$, then \eqref{asym1} became the asymptotical regularization in \cite{t94}
   \begin{equation}\label{asym0}
  \begin{aligned}
\frac{dx^\d(t)}{dt}& = - F'(x^\d(t))^*(F(x^\d(t))-y^\d), \quad 0<t\leq T. \\
\end{aligned}
  \end{equation}
with initial value $x^\delta(0) = x_0.$
In \cite{t94}, Ulrich Tautenhahn
 give convergence properties of method \eqref{asym0} and derive stability estimates. It also has been
show that $T$ plays the role of the regularization parameter and can be chosen by a generalized discrepancy principle.
For the autonomous ODE \eqref{asym0},  Runge-Kutta method can be used  and the   new type iterative regularization methods can be  constructed. The theory of the RK regularization method has been well-developed for linear inverse problems \cite{r05,zml20}, and for nonlinear inverse problems \cite{lhw07,bp08}, as some special cases, such as explicit Euler method(Landweber iteration)\cite{hns95},  explicit 2-stage R-K Landweber method \cite{lhw07}, implicit Euler method (Levenberg-Marquardt iteration) \cite{h97}.

Recently, there has been a great interest in analysing such dynamical flows. In \cite{zml20}, variational source conditions have been established for general spectral regularization methods which yield convergence rates. In \cite{lnw20}, the authors interpreted steady linear statistical inverse problems with white noise, and introduced a stochastic differential equation system. In \cite{bdes21}, the authors applied the general theory of convergence rates to the three, well studied examples, including Showalter's method, heavy ball method and the vanishing viscosity method.
Besides general spectral analysis and source condition, the conditional stability assumption was also a efficient tool to analyze the convergence rate, which was fist proposed in \cite{cy00}, where the convergence rate of the Tikhonov regularization in Hilbert spaces was proved, then, the results have been extended to Banach spaces in \cite{chl14}. In \cite{dqs14}, the authors analyzed a nonlinear Landweber iteration and proved its local convergence rates under the H\"{o}lder continuity of the inverse mapping. The latest related work was \cite{zqw21}, the authors discussed the convergence rate of Landweber-type iteration in Banach spaces.

In this manuscript,
we will analyze  the regularization property of  asymptotical regularization \eqref{asym1} through a proper choice of the terminating time. 
The paper is organized as follows. In section 2, we give some preliminaries from convex analysis.
In section 3, we present  the detailed analysis
of convergence and regularization properties of the  asymptotical regularization \eqref{asym1} including the convergence rate under
 conditional stability assumption.

\section{Tools from convex analysis}
\setcounter{equation}{0}
Given a convex function $\Theta: \X \to (-\infty, \infty]$, we use $ \p \Theta(x)$ to denote the subdifferential
of $\Theta$  at $x\in \X$, i.e.
$$
\p \Theta(x) := \{\xi \in \X: \Theta(\bar x) -\Theta(x) -\l \xi, \bar x-x\r\ge 0 \mbox{ for all } \bar x\in \X\}.
$$
Let $\D(\Theta): = \{x\in \X:\Theta(x)<\infty\}$ be its effective domain and let
\begin{equation*}
\D(\p\Theta): = \{x\in \D(\Theta):\p\Theta(x)\neq\varnothing\}.
\end{equation*}
 The Bregman distance induced by $\Theta$ at $x$ in the direction $\xi\in \p\Theta(x) $  is defined by
\begin{equation*}
D_\xi \Theta(\bar x,x):=\Theta(\bar x)-\Theta(x)-\l \xi, \bar x-x\r, \qquad \forall \bar x\in  \X
\end{equation*}
which is always nonnegative and satisfies the identity
\begin{equation}\label{2.1}
D_{\xi_2} \Theta(x,x_2)-D_{\xi_1} \Theta(x, x_1) =D_{\xi_2} \Theta(x_1,x_2) +\l \xi_2-\xi_1, x_1-x\r
\end{equation}
for all $x\in \D(\Theta), x_1, x_2\in \D(\p \Theta)$, and $\xi_1\in \p \Theta(x_1)$, $\xi_2\in \p \Theta(x_2)$.

A proper convex function $\Theta: \X \to (-\infty, \infty]$ is called uniformly convex if there exists
a strictly increasing function $h:[0,\infty)\rightarrow [0,\infty)$ with $h(0) = 0$ such that
\begin{equation} \label{2.2}
\Theta(\la \bar x+(1-\la)x) +\la (1-\la)h(\|x-\bar x\|) \le \la \Theta(\bar x) +(1-\la) \Theta(x)
\end{equation}
for all $\bar x,x\in\X$ and $\la\in[0,1]$. If $h(t) = c_0t^p$ for some $c_0>0$ and $p>1$ in (\ref{2.2}),
then $\Theta$ is called $p$-convex.

For a proper lower semi-continuous convex function $\Theta: \X \to (-\infty, \infty]$, its Legendre-Fenchel conjugate is defined by
\begin{equation*}
\Theta^*(\xi):=\sup_{x\in \X} \left\{\l\xi, x\r -\Theta(x)\right\}, \quad \xi\in \X
\end{equation*}
which is also proper, lower semi-continuous, and convex. If $\X$ is reflexive, then
\begin{equation}\label{2.3}
\xi\in \p \Theta(x) \Longleftrightarrow x\in \p \Theta^*(\xi) \Longleftrightarrow \Theta(x) +\Theta^*(\xi) =\l \xi, x\r.
\end{equation}
For detailed introduction of convex analysis, we refer to \cite{ZA02}, here we list some properties.
\begin{proposition}\label{Prop_pre}
For a proper lower semi-continuous convex function $\Theta$ and $p>1$, the following statements hold true:
 \begin{enumerate}
 \item The $\Theta$ is $p$-convex if and only if
\begin{equation}\label{pconvex}
D_\xi \Theta(\bar x,x)\geq c_0\|x-\bar x\|^p,\quad\forall \bar x\in \D(\Theta), \ x\in \D(\p\Theta), \ \xi\in\p\Theta(x)\,.
\end{equation}
\item If $\Theta$ is $p$-convex with $p\geq 2$, then $\D(\Theta^*)=\X$,
$\Theta^*$ is Fr\'{e}chet differentiable and its gradient $\nabla \Theta^*:  \X\to \X$ satisfies
\begin{equation*}
\|\nabla \Theta^*(\xi_1)-\nabla \Theta^*(\xi_2) \|\le \left(\frac{\|\xi_1-\xi_2\|}{2c_0}\right)^{\frac{1}{p-1}},
\quad \forall \xi_1, \xi_2\in \X.
\end{equation*}
\item If $\X$ is reflexive and $\Theta$ is $p$-convex with $p\geq 2$, then
\begin{equation*}
x=\nabla \Theta^*(\xi) \Longleftrightarrow \xi \in \p \Theta(x) \Longleftrightarrow x =\arg \min_{z\in \X} \left\{ \Theta(z) -\l \xi, z\r\right\}.
\end{equation*}
\item If $\Theta$ is $p$-convex, then for any pairs $(x,\xi)$ and $(\bar x,\bar \xi) $
with $x, \bar x \in \D(\p\Theta), \xi\in\p\Theta(x), \bar \xi \in\p\Theta(\bar{x})$, then
\begin{equation}
D_\xi \Theta(\bar x,x) \le \frac{1}{p^*(2c_0)^{p*-1}}\|\xi-\bar \xi\|^{p*}\,.
\end{equation}
where $p^*  = p/(p-1)$ be the conjugate exponent of $p$.
 \end{enumerate}
\end{proposition}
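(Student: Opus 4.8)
The plan is to prove the four items in the listed order, since (ii)--(iv) are duality consequences of (i); throughout I would use only elementary convexity, the three-point identity \eqref{2.1}, the conjugacy equivalence \eqref{2.3} (available because $\X$ is Hilbert, hence reflexive), and a fixed reference pair $(x_0,\xi_0)$ with $\xi_0\in\p\Theta(x_0)$. For (i), the ``only if'' direction is easy: dividing \eqref{2.2} (with $h(t)=c_0t^p$) by $\la>0$, bounding the difference quotient $[\Theta(x+\la(\bar x-x))-\Theta(x)]/\la$ below by $\l\xi,\bar x-x\r$, and letting $\la\to0^+$ gives \eqref{pconvex}. For the converse I would apply \eqref{pconvex} at $x_\la:=\la\bar x+(1-\la)x$ against both $x$ and $\bar x$ and take the convex combination of the two inequalities with weights $1-\la,\la$; the subgradient terms at $x_\la$ cancel, leaving $(1-\la)\Theta(x)+\la\Theta(\bar x)-\Theta(x_\la)$ bounded below by a positive multiple of $\|x-\bar x\|^p$. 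I expect this to be the main obstacle: $x_\la$ need not lie in $\D(\p\Theta)$, so one must add a density/lower-semicontinuity approximation step, and one must track the two convexity parameters to recover the constant $c_0$.

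For (ii), \eqref{pconvex} with the fixed pair gives $\Theta(z)\ge\Theta(x_0)+\l\xi_0,z-x_0\r+c_0\|z-x_0\|^p$, so $\Theta$ is supercoercive and
\[
\Theta^*(\xi)\le -\Theta(x_0)+\l\xi,x_0\r+\sup_{w\in\X}\bigl\{\l\xi-\xi_0,w\r-c_0\|w\|^p\bigr\}<\infty
\]
for all $\xi$ (the supremum is finite since $p>1$); hence $\D(\Theta^*)=\X$ and the finite convex function $\Theta^*$ is continuous. Strong convexity forces $\p\Theta^*(\xi)$ to be a singleton (if $x_1,x_2\in\p\Theta^*(\xi)$ then $\xi\in\p\Theta(x_1)\cap\p\Theta(x_2)$ by \eqref{2.3} and $0=D_\xi\Theta(x_2,x_1)+D_\xi\Theta(x_1,x_2)\ge 2c_0\|x_1-x_2\|^p$), so $\Theta^*$ is G\^ateaux differentiable and $\nabla\Theta^*(\xi)$ is that point. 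Writing $x_i:=\nabla\Theta^*(\xi_i)$ (so $\xi_i\in\p\Theta(x_i)$) and adding $D_{\xi_1}\Theta(x_2,x_1)\ge c_0\|x_1-x_2\|^p$ and $D_{\xi_2}\Theta(x_1,x_2)\ge c_0\|x_1-x_2\|^p$, the left-hand sides telescope to $\l\xi_1-\xi_2,x_1-x_2\r$, and Cauchy--Schwarz yields $\|x_1-x_2\|^{p-1}\le\|\xi_1-\xi_2\|/(2c_0)$, the stated estimate. Finally, the sandwich $0\le\Theta^*(\xi+\eta)-\Theta^*(\xi)-\l\eta,\nabla\Theta^*(\xi)\r\le\l\eta,\nabla\Theta^*(\xi+\eta)-\nabla\Theta^*(\xi)\r\le\|\eta\|(\|\eta\|/(2c_0))^{1/(p-1)}$, whose middle inequality comes from the subgradient inequality for $\Theta$ at $\nabla\Theta^*(\xi)$, shows the remainder is $o(\|\eta\|)$, i.e.\ $\Theta^*$ is Fr\'echet differentiable.

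Item (iii) then follows from \eqref{2.3}: $\xi\in\p\Theta(x)\Leftrightarrow\Theta(x)+\Theta^*(\xi)=\l\xi,x\r$, which says precisely that $x$ attains $\min_z\{\Theta(z)-\l\xi,z\r\}=-\Theta^*(\xi)$, the minimizer being unique by strict convexity and equal to $\nabla\Theta^*(\xi)$ by (ii). For (iv), substituting $\Theta(x)=\l\xi,x\r-\Theta^*(\xi)$ and $\Theta(\bar x)=\l\bar\xi,\bar x\r-\Theta^*(\bar\xi)$ (valid by \eqref{2.3}) into $D_\xi\Theta(\bar x,x)$ and using $\bar x=\nabla\Theta^*(\bar\xi)$ yields the Bregman duality $D_\xi\Theta(\bar x,x)=D_{\bar x}\Theta^*(\xi,\bar\xi)$; writing the latter as $\int_0^1\l\nabla\Theta^*(\bar\xi+t(\xi-\bar\xi))-\nabla\Theta^*(\bar\xi),\xi-\bar\xi\r\,dt$, bounding the integrand by the estimate of (ii), and using $\int_0^1 t^{1/(p-1)}\,dt=1/p^*$ gives $D_\xi\Theta(\bar x,x)\le\|\xi-\bar\xi\|^{p^*}/(p^*(2c_0)^{p^*-1})$, as claimed.
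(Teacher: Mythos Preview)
The paper does not actually prove this proposition: it is stated as a list of known facts from convex analysis with the sentence ``For detailed introduction of convex analysis, we refer to \cite{ZA02}, here we list some properties'' immediately preceding it, and no proof follows. So there is nothing to compare your argument against in the paper itself.

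That said, your outline is essentially the standard route and is sound. A couple of small remarks. In the converse of (i), your convex-combination trick at $x_\la$ indeed cancels the subgradient term, but it produces the lower bound $c_0\la(1-\la)\bigl[\la^{p-1}+(1-\la)^{p-1}\bigr]\|x-\bar x\|^p$, and $\la^{p-1}+(1-\la)^{p-1}\ge 1$ only when $1<p\le 2$; for $p>2$ you recover $p$-convexity with a smaller constant than $c_0$. You flag this correctly as the ``track the two convexity parameters'' obstacle; just be aware that the equivalence with the \emph{same} $c_0$ as stated is delicate for $p>2$. In (ii), the sandwich inequality you use is really the subgradient inequality for $\Theta^*$ at $\xi+\eta$ (equivalently, nonnegativity of $D_\xi\Theta(\nabla\Theta^*(\xi+\eta),\nabla\Theta^*(\xi))$), not ``for $\Theta$ at $\nabla\Theta^*(\xi)$'' as you wrote, but the computation is right. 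Finally, your proof of (iv) invokes the H\"older estimate from (ii), which in the paper is stated only for $p\ge 2$; the monotonicity argument you give for (ii) in fact yields $\|x_1-x_2\|^{p-1}\le\|\xi_1-\xi_2\|/(2c_0)$ for all $p>1$, so the integral bound in (iv) goes through for the full range $p>1$ once you note this.
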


\begin{assumption}\label{A2}
During this manuscript, we provide the following assumptions
\begin{enumerate}
\item[(a)] Let $\Theta:\X\to (-\infty, \infty]$ be a proper, weak lower semi-continuous, and $p$-convex functional with $p>1$ such that the condition
\eqref{pconvex} is satisfied for some $c_0>0$.
\item[(b)] There exists $\rho > 0$, $x_0\in \X$ and $\xi_0\in \p\Theta(x_0)$ such that $B_{2\rho}(x_0)\subset \D(F)$, where $B_{2\rho}(x_0)$ denotes the closed ball around $x_0$ with radius $2\rho$. In addition, assume (\ref{1.1}) has a solution $x^*\in \D(\Theta)$ with
$$
D_{\xi_0} \Theta(x^*, x_0) \le c_0 \rho^p\,.
$$

\item[(c)] The operator $F$ is continuous and weakly closed on $\D(F)$.

\item[(d)]  There exists a family of bounded linear operators $\{L(x): \X \to \Y\}_{x\in B_{2\rho}(x_0)\cap \D(\Theta)}$ such that
$x\to L(x)$ is continuous on $B_{2\rho}(x_0)\cap \D(\Theta)$ and there is $0\le \eta <1$ such that tangential cone condition
\begin{equation*}
\|F(x)-F(\bar x) -L(\bar x) (x-\bar x)\|<\eta \|F(x) -F(\bar x)\|
\end{equation*}
holds true for all $x, \bar x \in B_{2\rho}(x_0)\cap \D(\Theta)$. Moreover, there is a constant $C_0>0$ such that
$$
\|L(x) \|_{\X\to \Y} \le C_0, \quad \forall x \in B_{2\rho}(x_0).
$$
\end{enumerate}
\end{assumption}

\section{Asymptotical regularization with convex penalty}
\setcounter{equation}{0}
In this paper, we investigate the method of asymptotical regularization, which is the continuous analogue of \eqref{land}. In this section a regularized approximation $x^\d(T)$ of $x^*$
is obtained by solving the coupling initial value problem
 \begin{equation}\label{asymp}
  \begin{aligned}
\frac{d\xi^\d(t)}{dt}& = - L(x^\d(t))^*(F(x^\d(t))-\yd), \quad 0<t\leq T, \\
x(t) &= \nabla \Theta^*(\xi^\d(t)),\\
x^\d(0) &= x_0,\quad \xi^\d(0)  = \partial\Theta(x_0)\,.
\end{aligned}
  \end{equation}
Here and subsequently, the $L(x) $ is a family of bounded
linear operators from $\X$ to $\Y$ satisfying Assumption \ref{A2} (d) and functional $\Theta$ satisfying  Assumption \ref{A2} (a). The $T$ plays the role of the regularization parameter.


\subsection{Convergence results}
In this subsection, we study some convergence properties of \eqref{asymp}, the basic principle for proof have been developed in \cite{t94}, where corresponding results have been proved for classical Landweber asymptotical regularization in Hilbert space.
\begin{lemma}\label{lem1}
Let Assumption \ref{A2} hold true. Let $(\xi^\d(T),x^\d(T))$  be the solution of \eqref{asymp} for $T>0$. Then the approximated solution $x^\d(T)$ and the residual $\|F(x^\d(T))-y^\d\|$ are continuous with respect to $T$.
\end{lemma}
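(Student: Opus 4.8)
The plan is to view \eqref{asymp} as an autonomous ODE in the dual variable $\xi^\d$ and invoke a Picard--Lindelöf / continuous-dependence argument. First I would rewrite the flow purely in terms of $\xi := \xi^\d$: since $x^\d(t) = \nabla\Theta^*(\xi^\d(t))$, the system becomes the single equation
\begin{equation*}
\frac{d\xi}{dt} = -L\bigl(\nabla\Theta^*(\xi)\bigr)^*\bigl(F(\nabla\Theta^*(\xi)) - \yd\bigr) =: \Phi(\xi),
\qquad \xi(0) = \xi_0 = \partial\Theta(x_0).
\end{equation*}
The key observation is that $\Phi$ is continuous (indeed locally Lipschitz on the relevant region): by Proposition~\ref{Prop_pre}(ii) the map $\nabla\Theta^*$ is Hölder continuous from $\X$ to $\X$ when $p\ge 2$ (and at worst $\Theta$ can be taken $p$-convex with $p\ge2$ after the standard reduction, or one argues with uniform continuity directly for $1<p<2$), $F$ is continuous by Assumption~\ref{A2}(c), $x\mapsto L(x)$ is continuous by Assumption~\ref{A2}(d), and $L(x)$ is uniformly bounded by $C_0$ on $B_{2\rho}(x_0)$. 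Hence on any time interval on which the trajectory $x^\d(t)$ stays inside $B_{2\rho}(x_0)\cap\D(\Theta)$, the composition $\Phi$ is continuous and bounded, so the initial value problem has a solution depending continuously on $t$; in particular $t\mapsto\xi^\d(t)$ is $C^1$ and $t\mapsto x^\d(t)=\nabla\Theta^*(\xi^\d(t))$ is continuous.

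Next I would upgrade "continuous in $t$ along a fixed trajectory" to "continuous in the terminal time $T$," which is what the lemma asks. Here one uses that $x^\d(\cdot)$ and $\xi^\d(\cdot)$ are the \emph{same} functions regardless of the endpoint $T$ (the flow does not know where we intend to stop), so $x^\d(T)$ is literally the value at $T$ of a fixed continuous curve, whence $T\mapsto x^\d(T)$ is continuous; likewise $T\mapsto F(x^\d(T))-\yd$ is continuous since $F$ is continuous, and therefore so is $T\mapsto\|F(x^\d(T))-\yd\|$. To make this rigorous one must first guarantee that the trajectory is defined for all $T>0$ (or at least on the maximal interval of interest) and stays in $B_{2\rho}(x_0)$; this is where Assumption~\ref{A2}(b) enters, via the standard monotonicity estimate: differentiating the Bregman distance $D_{\xi^\d(t)}\Theta(x^*,x^\d(t))$ along the flow and using the tangential cone condition \eqref{asym1}(d) together with \eqref{1.2} shows this quantity is nonincreasing (up to a controllable term), so by Proposition~\ref{Prop_pre}(i) one gets $\|x^\d(t)-x^*\|^p \lesssim D_{\xi^\d(t)}\Theta(x^*,x^\d(t)) \le D_{\xi_0}\Theta(x^*,x_0)\le c_0\rho^p$, keeping $x^\d(t)$ inside $B_{2\rho}(x_0)$ for all $t$.

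The main obstacle is the low-regularity case $1<p<2$, where $\nabla\Theta^*$ is only uniformly continuous rather than Hölder/Lipschitz, so one cannot quote a textbook Lipschitz existence theorem directly; I would handle this either by noting that $\Theta$ may be assumed $p$-convex with $p\ge 2$ without loss of generality in this setting, or by a Peano-type existence argument combined with a uniqueness/stability estimate derived from the uniform convexity of $\Theta$ (equivalently, from the monotonicity of $\nabla\Theta^*$ as the gradient of a convex function, which already yields a one-sided Lipschitz estimate sufficient for uniqueness and continuous dependence). A secondary technical point is verifying that the a priori bound really confines the trajectory to the region where Assumption~\ref{A2}(d) applies uniformly; this is routine once the Bregman-distance monotonicity is in hand, and it is exactly the continuous analogue of the discrete bookkeeping carried out in \cite{t94}.
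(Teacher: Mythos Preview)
Your argument is correct but takes a substantially more elaborate route than the paper. The paper's proof is a two-line computation that does not invoke any ODE existence theory at all: the hypothesis already hands you a solution pair $(\xi^\d,x^\d)$, so in particular $t\mapsto\xi^\d(t)$ is differentiable. One then simply applies the H\"older estimate of Proposition~\ref{Prop_pre}(ii),
\[
\|x^\d(T+\Delta T)-x^\d(T)\|
\le \left(\frac{\|\xi^\d(T+\Delta T)-\xi^\d(T)\|}{2c_0}\right)^{\frac{1}{p-1}}
= \left(\left\|\frac{\xi^\d(T+\Delta T)-\xi^\d(T)}{\Delta T}\right\|\cdot\frac{|\Delta T|}{2c_0}\right)^{\frac{1}{p-1}},
\]
and observes that the difference quotient converges to $\frac{d\xi^\d}{dt}\big|_{t=T}$ while the factor $|\Delta T|/(2c_0)$ tends to zero; continuity of the residual then follows from continuity of $F$. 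Your Picard--Lindel\"of / a priori confinement program is not wrong, but it is answering a question the lemma does not ask: existence and the fact that the trajectory stays in $B_{2\rho}(x_0)$ are handled separately (in the Remark following Proposition~\ref{prop3} and in the closing Remark of the section), not inside this lemma. The upside of your approach is that it makes the well-posedness of \eqref{asymp} explicit; the upside of the paper's approach is that, once a solution is assumed, continuity of $x^\d$ is an immediate consequence of the smoothness of $\nabla\Theta^*$ and nothing more.
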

\begin{proof}
Referring to Proposition 2.1 (ii),
\begin{equation*}
    \begin{aligned}
 \lim_{\Delta T \to 0} \left\|x^\d(T+\Delta T) - x^\d(T)\right\|&=\lim_{\Delta T \to 0}  \left\|\nabla \Theta^*(\xi^\d(T+\Delta T)) - \nabla \Theta^*(\xi^\d(T))\right\| \\
&\leq \lim_{\Delta T \to 0}  \left(\frac{\|\xi^\d(T+\Delta T)-\xi^\d(T)\|}{2c_0}\right)^{\frac{1}{p-1}} \\
& = \lim_{\Delta T \to 0}\left(\left\|\frac{\xi^\d(T+\Delta T)-\xi^\d(T)}{\Delta T}\right\|\cdot\frac{\Delta T}{2c_0}\right)^{\frac{1}{p-1}}\,.
\end{aligned}
\end{equation*}
Since
\begin{equation} \label{xidiff}
\lim_{\Delta T \to 0}\left\|\frac{\xi^\d(T+\Delta T)-\xi^\d(T)}{\Delta T}\right\| = \frac{d\xi^\delta(t)}{dt}|_{t=T}
\end{equation}
exists and $\lim_{\Delta T \to 0} \frac{\Delta T}{2c_0} = 0$. This implies $x^\d(T)$ is continuous. With the continuity of $F$ the residual is continuous as well.
\end{proof}

\begin{proposition}\label{prop2}
Let Assumption \ref{A2} hold true and $(\xi^\d(T),x^\d(T))$  be the solution of \eqref{asymp} for $T>0$. Let $\hat x$ be any solution of \eqref{1.1} satisfying $D_{\xi_0}\Theta(\hat x, x_0)\leq c_0\rho^p$.
Define
\begin{equation}\label{varphi}
\varphi(T): = D_{\xi^\d(T)}\Theta(\hat x,x^\d(T)).
\end{equation}
If $x^\d(T)\in B_{2\rho}(x_0)$, then $\varphi(T)$ is differentiable and
\begin{equation}
\varphi'(T)\leq -(1-\eta)\|F(x^\d(T))-y\|^2 + \d(1+\eta)\|F(x^\d(T))-y^\d\|\,.
\end{equation}
In the case $\d =0$,
\begin{equation}\label{Fxd0}
\int_0^\infty \|F(x(\tau))-y\|^2 d\tau < \frac{1}{1-\eta}  D_{\xi_0}\Theta(\hat x,x_0)\,,
\end{equation}
and
\begin{equation}\label{convergence}
\lim_{T\rightarrow\infty}\|F(x(T))-y\|=0\,.
\end{equation}
\end{proposition}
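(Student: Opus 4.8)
The plan is to establish differentiability of $\varphi$ with an explicit formula for $\varphi'(T)$, insert the flow equation \eqref{asymp} together with the tangential cone condition of Assumption~\ref{A2}(d), and then treat the noise-free case by a confinement (bootstrap) argument followed by a uniform-continuity argument. For the first step, apply the three-point identity \eqref{2.1} with $x=\hat x$, $(x_1,\xi_1)=(x^\d(T),\xi^\d(T))$ and $(x_2,\xi_2)=(x^\d(T+h),\xi^\d(T+h))$, which gives
\begin{equation*}
\varphi(T+h)-\varphi(T)=D_{\xi^\d(T+h)}\Theta\big(x^\d(T),x^\d(T+h)\big)+\big\langle \xi^\d(T+h)-\xi^\d(T),\,x^\d(T)-\hat x\big\rangle .
\end{equation*}
By Proposition~\ref{Prop_pre}(iv) the first term is at most $\tfrac{1}{p^*(2c_0)^{p^*-1}}\|\xi^\d(T+h)-\xi^\d(T)\|^{p^*}$, and since $\xi^\d$ is differentiable in $t$ (cf.\ \eqref{xidiff}), so that $\|\xi^\d(T+h)-\xi^\d(T)\|=O(|h|)$, while $p^*=p/(p-1)>1$, this term is $o(|h|)$. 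Dividing by $h$ and letting $h\to0$ (for $h<0$ one uses \eqref{2.1} with the two times interchanged) shows $\varphi$ is differentiable with $\varphi'(T)=\big\langle \tfrac{d\xi^\d}{dt}\big|_{t=T},\,x^\d(T)-\hat x\big\rangle=-\big\langle F(x^\d(T))-\yd,\,L(x^\d(T))(x^\d(T)-\hat x)\big\rangle$.

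Next, for the differential inequality: from $D_{\xi_0}\Theta(\hat x,x_0)\le c_0\rho^p$ and Proposition~\ref{Prop_pre}(i) one has $\|\hat x-x_0\|\le\rho$, hence $\hat x\in B_{2\rho}(x_0)\cap\D(\Theta)$; together with the hypothesis $x^\d(T)\in B_{2\rho}(x_0)$ the tangential cone condition applies to the pair $(x^\d(T),\hat x)$. Since $F(\hat x)=y$, it yields $L(x^\d(T))(\hat x-x^\d(T))=\big(y-F(x^\d(T))\big)+w$ with $\|w\|\le\eta\|F(x^\d(T))-y\|$, and in particular $\|L(x^\d(T))(\hat x-x^\d(T))\|\le(1+\eta)\|F(x^\d(T))-y\|$. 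Substituting into the formula for $\varphi'(T)$, writing $F(x^\d(T))-\yd=(F(x^\d(T))-y)+(y-\yd)$, and estimating the resulting inner products by Cauchy--Schwarz together with $\|y-\yd\|\le\d$ produces the asserted bound (a different grouping of the same terms gives the fully ``noisy'' variant with $\|F(x^\d(T))-\yd\|$ throughout, which differs by a term of order $\d^2$).

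For the case $\d=0$, the inequality becomes $\varphi'(T)\le-(1-\eta)\|F(x(T))-y\|^2\le 0$ whenever $x(T)\in B_{2\rho}(x_0)$, and this self-improving property confines the trajectory: as long as $x(\tau)\in B_{2\rho}(x_0)$ for $\tau\le T$, $\varphi$ is non-increasing, so $\varphi(T)\le\varphi(0)=D_{\xi_0}\Theta(\hat x,x_0)\le c_0\rho^p$; by Proposition~\ref{Prop_pre}(i), $\|x(T)-\hat x\|\le\rho$, whence $\|x(T)-x_0\|\le 2\rho$, and by continuity of $x(\cdot)$ (Lemma~\ref{lem1}) this persists for all $T>0$. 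Thus the inequality holds for every $T$; integrating on $[0,T]$ and using $\varphi\ge 0$ gives $(1-\eta)\int_0^T\|F(x(\tau))-y\|^2\,d\tau\le\varphi(0)-\varphi(T)\le D_{\xi_0}\Theta(\hat x,x_0)$, and $T\to\infty$ yields \eqref{Fxd0}. To obtain \eqref{convergence} I would show $t\mapsto\|F(x(t))-y\|$ is uniformly continuous on $[0,\infty)$: by the tangential cone condition $\|F(x(t))-y\|=\|F(x(t))-F(\hat x)\|\le\tfrac{1}{1-\eta}\|L(\hat x)(x(t)-\hat x)\|\le\tfrac{C_0\rho}{1-\eta}$, so the right-hand side of \eqref{asymp} is bounded, $\xi(\cdot)$ is Lipschitz in $t$, $x(\cdot)=\nabla\Theta^*(\xi(\cdot))$ is H\"older continuous in $t$ by Proposition~\ref{Prop_pre}(ii), and one more application of the tangential cone condition to $x(t),x(t+h)$ transfers this modulus of continuity to $F(x(\cdot))$; an integrable, nonnegative, uniformly continuous function on $[0,\infty)$ tends to $0$.

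The first two steps are essentially bookkeeping once Proposition~\ref{Prop_pre} is available. The delicate part is the noise-free case: besides the bootstrap confinement of the trajectory to $B_{2\rho}(x_0)$, the passage from the square-integrability \eqref{Fxd0} to the pointwise limit \eqref{convergence} is not automatic, since $F$ is only assumed continuous. The point I expect to cost the most care is precisely that the tangential cone condition upgrades the (H\"older) regularity of $t\mapsto x(t)$ to (H\"older, hence uniform) continuity of $t\mapsto F(x(t))$, which is what makes the standard ``integrable plus uniformly continuous implies vanishing'' argument go through.
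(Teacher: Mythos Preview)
Your argument for the differentiability of $\varphi$ and for the differential inequality is essentially identical to the paper's: both use the three-point identity \eqref{2.1}, control the Bregman remainder $D_{\xi^\d(T+h)}\Theta(x^\d(T),x^\d(T+h))$ via Proposition~\ref{Prop_pre}(iv) together with \eqref{xidiff}, and then expand the surviving inner product using the flow equation and the tangential cone condition. (The paper arrives at the inequality with $\|F(x^\d(T))-\yd\|^2$ rather than $\|F(x^\d(T))-y\|^2$ in the leading term; your remark that a different grouping yields either variant is correct.)

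Where you differ is in the noise-free part, and there you are more thorough than the paper. The paper simply integrates the inequality and asserts \eqref{convergence} ``consequently'' from \eqref{Fxd0}, without arguing that $x(T)$ stays in $B_{2\rho}(x_0)$ for all $T$ (this is deferred to the remark after Proposition~\ref{prop3}) and without explaining why square-integrability forces the pointwise limit. Your bootstrap confinement and your uniform-continuity argument---bounding the residual via the tangential cone condition, hence the right-hand side of \eqref{asymp}, to get $\xi$ Lipschitz, then $x=\nabla\Theta^*(\xi)$ H\"older by Proposition~\ref{Prop_pre}(ii), and finally $F\circ x$ H\"older again by the tangential cone condition---fill both gaps correctly. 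The cost is that Proposition~\ref{Prop_pre}(ii) needs $p\ge 2$, but the paper itself relies on this in Lemma~\ref{lem1}, so it is a standing tacit assumption rather than a weakness of your route.
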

\begin{proof}
Referring to \eqref{2.1}, we have
\begin{equation*}
    \begin{aligned}
&\quad\lim_{\Delta T \to 0}\frac{\varphi(T+\Delta T)-\varphi(T)}{\Delta T}\\
&= \lim_{\Delta T \to 0}\frac{D_{\xi^\d(T+\Delta T)}\Theta(\hat x,x^\d(T+\Delta T))-D_{\xi^\d(T)}\Theta(\hat x,x^\d(T))}{\Delta T}\\
  &= \lim_{\Delta T \to 0}\frac{D_{\xi^\d(T+\Delta T)}\Theta(x^\d(T),x^\d(T+\Delta T))+\l \xi^\d(T+\Delta T)-\xi^\d(T),x^\d(T)-\hat x\r}{\Delta T}\,.
\end{aligned}
  \end{equation*}

The application of Proposition \ref{Prop_pre}(iv) provides the estimate of the first term
\begin{equation*}
    \begin{aligned}
&\quad\lim_{\Delta T \to 0} \frac{D_{\xi^\d(T+\Delta T)}\Theta(x^\d(T),x^\d(T+\Delta T))}{\Delta T}\\
 &= \lim_{\Delta T \to 0}\frac{(\Delta T)^{p^*-1}}{p^*(2c_0)^{p^*-1}}\left\|\frac{\xi^\d(T+\Delta T)-\xi^\d(T)}{\Delta T}\right\|^{p^*}\,.
\end{aligned}
  \end{equation*}
Referring to \eqref{xidiff} again, the above estimate converges to $0$.  For the second term
\begin{equation*}
    \begin{aligned}
&\quad\lim_{\Delta T \to 0}\left\l\frac{ \xi^\d(T+\Delta T)-\xi^\d(T)}{\Delta T},x^\d(T)-\hat x\right\r\\
&= \left\l \frac{d\xi^\d(T)}{dT},x^\d(T)-\hat x\right\r\\
& = -\left\l L(x^\d(T))^*(F(x^\d(T))-\yd),x^\d(T)-\hat x\right\r\\
& = -\left\l (F(x^\d(T))-\yd),L(x^\d(T))(x^\d(T)-\hat x)\right\r\\
& = -\|F(x^\d(T))-\yd\|^2+\left\l F(x^\d(T))-\yd,F(x^\d(T))-\yd-L(x^\d(T))(x^\d(T)-\hat x)\right\r\\
& = -\|F(x^\d(T))-\yd\|^2+\d\|F(x^\d(T))-y^\d\|+
\l F(x^\d(T))-\yd, y-F(x^\d(T))-L(x^\d(T))(\hat x-x^\d(T))\r\,.
\end{aligned}
  \end{equation*}
Since, assume $x^\d(T)\in B_{2\rho}(x_0)$, we have
\begin{equation}\label{instead}
\|y-F(x^\d(T))-L(x^\d(T))(\hat x-x^\d(T))\|\leq \eta\|y-F(x^\d(T))\|.
\end{equation}
Putting it into above estimates,  it is consequently that $\varphi(t)$ is differentiable and
\begin{equation*}
    \begin{aligned}
\varphi'(T) &= \lim_{\Delta T \to 0}\frac{\varphi(T+\Delta T)-\varphi(T)}{\Delta T}\\
&\leq -\|F(x^\d(T))-\yd\|^2+\d\|F(x^\d(T))-y^\d\|+\eta\|F(x^\d(T))-\yd\|\left(\d+\|F(x^\d(T))-y^\d\|\right)\\
& = -(1-\eta)\|F(x^\d(T))-\yd\|^2 + \d(1+\eta)\|F(x^\d(T))-y^\d\|\,.
\end{aligned}
  \end{equation*}

In the case $\d = 0$, by integration from $0$ to $\infty$, we have
\begin{equation*}
(1-\eta)\int_0^\infty\|F(x^\d(\tau))-y\|^2d\tau\leq -\int_0^\infty \varphi'(\tau)d\tau
\end{equation*}
which gives \eqref{Fxd0} and consequently \eqref{convergence}.
  \end{proof}

Proposition \ref{prop2} shows that, the Bregman distance $D_{\xi^\d(T)}\Theta(\hat x,x^\d(T))$ as a function of $T$ is monotonically decreasing, as far as $\|F(x^\d(T))-y^\d\|>\tau \d $ is holding with $\tau  = (1+\eta)/(1-\eta)$. Hence, it makes sense to choose the regularization parameter by following discrepancy principle, i.e., $T = T^*$ is a solution of the nonlinear equation
  \begin{equation}\label{h}
  h(T) = \|F(x^\d(T))-y^\d\|-\tau \d =0\,\ \textrm{with }\  \tau  > (1+\eta)/(1-\eta)\,.
  \end{equation}
In the following proposition, we prove equation \eqref{h} has a solution.
  \begin{proposition}\label{prop3}
  Under the condition of Proposition 3.2, if $\|F(x_0)-y^\d\|>\tau \d$, then the nonlinear equation $h(T)=0$ exists at least one solution $T = T^*$.
  \end{proposition}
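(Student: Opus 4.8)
The plan is to argue by contradiction using the continuity of the residual $h(T)$ established in Lemma \ref{lem1} together with the monotonicity estimate of Proposition \ref{prop2}, which forces the Bregman distance to stay bounded and hence keeps the flow inside the ball $B_{2\rho}(x_0)$. First I would note that by hypothesis $h(0) = \|F(x_0)-y^\d\| - \tau\d > 0$. Suppose, for contradiction, that $h(T) > 0$ for all $T > 0$, i.e. $\|F(x^\d(T))-y^\d\| > \tau\d$ with $\tau > (1+\eta)/(1-\eta)$ throughout.

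The key step is to show that under this assumption $x^\d(T) \in B_{2\rho}(x_0)$ for all $T \ge 0$, so that Proposition \ref{prop2} is applicable on all of $[0,\infty)$. I would do this by a standard continuity/bootstrap argument: as long as $x^\d(T) \in B_{2\rho}(x_0)$, the estimate for $\varphi'(T)$ holds, and combining $\|F(x^\d(T))-y\| \ge \|F(x^\d(T))-y^\d\| - \d$ with $\|F(x^\d(T))-y^\d\| > \tau\d$ gives, after elementary manipulation, $\varphi'(T) \le -c\,\|F(x^\d(T))-y^\d\|^2 < 0$ for some $c > 0$ depending on $\eta$ and $\tau$; in particular $\varphi(T) \le \varphi(0) = D_{\xi_0}\Theta(\hat x, x_0) \le c_0\rho^p$. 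By Proposition \ref{Prop_pre}(i), $\|x^\d(T) - \hat x\| \le \left(\varphi(T)/c_0\right)^{1/p} \le \rho$, and since $\hat x$ satisfies $D_{\xi_0}\Theta(\hat x,x_0)\le c_0\rho^p$ one also gets $\|\hat x - x_0\| \le \rho$, whence $\|x^\d(T) - x_0\| \le 2\rho$. The bootstrap closes because the set of $T$ for which $x^\d(T)$ lies in the \emph{open} ball of radius $2\rho$ is open, nonempty (contains $0$, using $\|x_0 - \hat x\|\le\rho<2\rho$), and the above shows it is also closed in $[0,\infty)$; hence it is all of $[0,\infty)$.

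With $x^\d(T) \in B_{2\rho}(x_0)$ for all $T$, I would then integrate the differential inequality $\varphi'(T) \le -c\,\|F(x^\d(T))-y^\d\|^2 \le -c\,\tau^2\d^2$ from $0$ to $T$, obtaining $0 \le \varphi(T) \le \varphi(0) - c\,\tau^2\d^2\,T$. If $\d > 0$ this right-hand side becomes negative for $T$ large, contradicting nonnegativity of the Bregman distance; if one prefers to cover $\d = 0$ too (though the discrepancy principle is only of interest for $\d > 0$), one instead uses \eqref{convergence} to get $\|F(x^\d(T))-y^\d\| \to 0 < \tau\d$ is vacuous — so the relevant case is genuinely $\d > 0$. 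Either way the standing assumption $h(T) > 0$ for all $T$ is untenable. Hence there exists $T_1 > 0$ with $h(T_1) \le 0$; since $h(0) > 0$ and $h$ is continuous by Lemma \ref{lem1}, the intermediate value theorem yields a point $T^* \in (0, T_1]$ with $h(T^*) = 0$.

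I expect the main obstacle to be the bootstrap step guaranteeing $x^\d(T) \in B_{2\rho}(x_0)$ for all $T$: Proposition \ref{prop2} is only stated \emph{conditionally} on $x^\d(T) \in B_{2\rho}(x_0)$, so one must be careful to invoke it only on the maximal interval where this holds and then verify that the derived bound $\varphi(T)\le c_0\rho^p$ prevents $x^\d(T)$ from ever reaching the boundary. A clean way is to work with the open ball on the bootstrap interval and pass to the closed ball in the final estimate; one should also confirm that the coupled initial value problem \eqref{asymp} indeed has a global solution (or at least a solution on the maximal interval under consideration), which follows from the Lipschitz-type bound on $\nabla\Theta^*$ in Proposition \ref{Prop_pre}(ii) together with the boundedness of $L(x)$ from Assumption \ref{A2}(d).
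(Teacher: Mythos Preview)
Your proposal is correct and follows essentially the same route as the paper: assume $h(T)>0$ for all $T$, use the differential inequality from Proposition~\ref{prop2} together with $\|F(x^\d(T))-y^\d\|>\tau\d$ to get $\varphi'(T)\le -\bigl(1-\eta-\tfrac{1+\eta}{\tau}\bigr)\|F(x^\d(T))-y^\d\|^2$, integrate, and reach a contradiction via $\bigl(1-\eta-\tfrac{1+\eta}{\tau}\bigr)T\tau^2\d^2 < c_0\rho^p$. The only difference is that you spell out the bootstrap argument keeping $x^\d(T)\in B_{2\rho}(x_0)$ inside the proof, whereas the paper defers this to the Remark immediately following the proposition; your treatment is in fact the more careful one.
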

  \begin{proof}
Referring to Lemma \ref{lem1}, the function $h(T)$ is continuous, and $h(0) =\|F(x_0)-y^\d\|-\tau \d>0$. It is sufficient to show there exists a $\bar{T}$, such that $h(\bar T)\leq 0$. In contrast, we assume $h(T)> 0$ for all $T<\infty$, i.e.,
 \[ \|F(x^\d(T))-y^\d\|>\tau \d, \quad {\rm for~~ all}\quad  T\geq0. \]
  Combining this with Proposition \ref{prop2}, for all $T\geq0$,
\begin{equation*}
    \begin{aligned}
 \varphi'(T) &\leq -(1-\eta)\|F(x^\d(T))-y\|^2 + \d(1+\eta)\|F(x^\d(T))-y^\d\|\\
 &<- \left(1-\eta-\frac{1+\eta}{\tau}\right)\|F(x^\d(t))-y^\d\|^2\,.
\end{aligned}
  \end{equation*}
By integration, it follows that,
   \begin{equation}
    \begin{aligned}
\varphi(T)-\varphi(0) \leq -\left(1-\eta-\frac{1+\eta}{\tau}\right)   \int_0^{T} \|F(x^\d(\tau))-y^\d\|^2 d\tau\,,
\end{aligned}
  \end{equation}
this implies
    \begin{equation}
    \begin{aligned}
 \left(1-\eta-\frac{1+\eta}{\tau}\right)   \int_0^{T} \|F(x^\d(t))-y^\d\|^2 dt \leq \varphi(0).
\end{aligned}
  \end{equation}
Thus,
 \[ \left(1-\eta-\frac{1+\eta}{\tau}\right)T\tau^2\delta^2< D_{\xi_0}\Theta(\hat x,x_0)\leq c_0\rho^p\,.\]
 This contradicts with the assumption when $T$ is large.
  \end{proof}

\begin{remark}
Since $\varphi'(T)\leq 0$ for $T\leq T^*$, we obtain that
\begin{equation*}
D_{\xi^\delta(T)}\Theta(\hat x,x^\delta(T))\leq D_{\xi_0}\Theta(\hat x,x_0)\leq c_0\rho^p\,.
\end{equation*}
Applying $p-$convexity of $\Theta$, it is consequently
\begin{equation*}
\|\hat x-x^\delta(T)\|\leq\rho\,.
\end{equation*}
Then, referring to triangle inequality, we find the solution $x^\delta(T)$ remains in $B_{2\rho}(x_0)\subset D(F)$. Furthermore, in the case $\d=0$, $x(T)$ remains in $B_{2\rho}(x_0)$ for all $T<\infty$, this is because $\frac{d}{dT} D_{\xi(T)}\Theta(\hat x,x(T))<0$ for all $T<\infty$.
\end{remark}

In the next, we prove that the solution $x(T)$ of \eqref{asymp} converges (for $T\rightarrow\infty$) to a solution of \eqref{1.1}.
  \begin{theorem}\label{th3.5}
Let Assumption \ref{A2} hold true and let $(\xi(T),x(T))$  be the solution of \eqref{asymp} in the noisefree case for $T>0$. Then, there exists a solution $\bar x\in B_{2\rho}(x_0)\cap D(\Theta)$ of (\ref{1.1}) such that
\begin{equation*}
\lim_{T\rightarrow \infty} \|x(T)-\bar x\|=0 
\end{equation*}
\end{theorem}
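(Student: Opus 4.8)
The plan is to show that $\{x(T)\}_{T>0}$ is a Cauchy net in $\X$ as $T\to\infty$, following the standard Landweber-type argument adapted to the Bregman setting. First I would establish monotonicity of $T\mapsto D_{\xi(T)}\Theta(x^*,x(T))$ for \emph{every} solution $x^*$ of \eqref{1.1} with $D_{\xi_0}\Theta(x^*,x_0)\le c_0\rho^p$: this is immediate from Proposition \ref{prop2} with $\d=0$, since $\varphi'(T)\le-(1-\eta)\|F(x(T))-y\|^2\le0$, so the limit $d_*:=\lim_{T\to\infty}D_{\xi(T)}\Theta(x^*,x(T))$ exists and is finite. Because each such Bregman distance is bounded by $c_0\rho^p$, the Remark already guarantees $x(T)\in B_{2\rho}(x_0)$ for all $T$, so the estimates of Proposition \ref{prop2} apply throughout.

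Next I would pick a sequence $T_k\to\infty$ and, for $s\le t$ drawn from it, control $D_{\xi(s)}\Theta(x(t),x(s))$. Using the three-point identity \eqref{2.1} with $x=x(t)$, $x_1=x(s)$, $x_2=x(t)$ (or more symmetrically splitting through a chosen reference point), one gets
\begin{equation*}
D_{\xi(s)}\Theta(x(t),x(s))=D_{\xi(s)}\Theta(x^*,x(s))-D_{\xi(t)}\Theta(x^*,x(t))+\langle \xi(t)-\xi(s),x(t)-x^*\rangle .
\end{equation*}
The first two terms form a telescoping difference that tends to $0$ as $s,t\to\infty$ by convergence of $d_*$. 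For the inner-product term I would write $\xi(t)-\xi(s)=\int_s^t \xi'(\tau)\,d\tau=-\int_s^t L(x(\tau))^*(F(x(\tau))-y)\,d\tau$, so that
\begin{equation*}
|\langle \xi(t)-\xi(s),x(t)-x^*\rangle|\le \int_s^t \|F(x(\tau))-y\|\,\|L(x(\tau))(x(t)-x^*)\|\,d\tau ;
\end{equation*}
the tangential cone condition bounds $\|L(x(\tau))(x(t)-x^*)\|$ by $(1+\eta)\|F(x(t))-y\|+\|F(x(\tau))-y\|$ (after adding and subtracting $F(x(t))-F(x(\tau))$ and using \eqref{instead} at both $x(\tau)$ and $x(t)$), and all of $\|F(x(\cdot))-y\|$ is square-integrable on $[0,\infty)$ by \eqref{Fxd0}. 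Hence a Cauchy–Schwarz argument over $[s,\infty)$ shows this term $\to0$ as $s\to\infty$. Combining, $D_{\xi(s)}\Theta(x(t),x(s))\to0$, and $p$-convexity \eqref{pconvex} upgrades this to $\|x(t)-x(s)\|\to0$, so $x(T_k)$ converges to some $\bar x\in B_{2\rho}(x_0)$; a routine argument shows the limit is independent of the sequence, giving $\lim_{T\to\infty}\|x(T)-\bar x\|=0$.

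Finally I would identify $\bar x$ as a solution of \eqref{1.1}: since $\|F(x(T_k))-y\|\to0$ by \eqref{convergence} and $F$ is continuous (Assumption \ref{A2}(c)), $F(\bar x)=y$; weak closedness is available if one only has weak convergence, but here norm convergence suffices. Membership $\bar x\in D(\Theta)$ follows since $D_{\xi_0}\Theta(\bar x,x_0)\le\liminf_k D_{\xi_0}\Theta(x(T_k),x_0)<\infty$ using weak lower semicontinuity of $\Theta$ and the fact that $D_{\xi_0}\Theta(x(T_k),x_0)$ stays bounded (which in turn follows from the identity \eqref{2.1} relating it to the convergent quantity $D_{\xi(T_k)}\Theta(x^*,x(T_k))$ plus controlled cross terms). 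The main obstacle is the estimate of the cross term $\langle\xi(t)-\xi(s),x(t)-x^*\rangle$: one must carefully exploit the tangential cone condition to replace the unbounded-looking factor $\|L(x(\tau))(x(t)-x^*)\|$ by residual norms, and then invoke square-integrability of the residual — this is exactly where the structure of \eqref{asymp} and \eqref{Fxd0} are essential, and where the Banach/Bregman setting requires more care than the Hilbert-space original in \cite{t94}.
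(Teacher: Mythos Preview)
Your overall architecture---monotonicity of $T\mapsto D_{\xi(T)}\Theta(x^*,x(T))$, the three-point identity \eqref{2.1}, and reduction to controlling the cross term $\langle\xi(t)-\xi(s),x(t)-x^*\rangle$---matches the paper's. The gap is precisely in the cross-term estimate. Your tangential-cone bound
\[
\|L(x(\tau))(x(t)-x^*)\|\ \lesssim\ \|F(x(t))-y\|+\|F(x(\tau))-y\|
\]
produces, after integrating against $\|F(x(\tau))-y\|$, a contribution of the form
\[
\|F(x(t))-y\|\int_s^t\|F(x(\tau))-y\|\,d\tau,
\]
and there is no reason this tends to $0$ uniformly in $t>s$ as $s\to\infty$. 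Square-integrability of the residual together with $\|F(x(T))-y\|\to 0$ is \emph{not} sufficient, and no Cauchy--Schwarz manoeuvre rescues it (any such bound introduces an uncontrolled factor $\sqrt{t-s}$). One can in fact build Lipschitz functions $r\ge 0$ with $r(T)\to 0$ and $\int_0^\infty r^2<\infty$ for which $\sup_{t>s}r(t)\int_s^t r(\tau)\,d\tau=\infty$ for every $s$, so the obstruction is genuine and not merely a matter of constants.

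The paper closes this gap by introducing an auxiliary time $\theta(T)\in[0,T]$ at which the residual is minimal,
\[
\|F(x(\theta(T)))-y\|=\min_{0\le t\le T}\|F(x(t))-y\|,
\]
and first proving that $\{x(\theta(T))\}_{T>0}$ is Cauchy. The point is that for $s\in[\theta(T_1),\theta(T_2)]$ one has $\|F(x(\theta(T_2)))-y\|\le\|F(x(s))-y\|$, so the tangential-cone splitting gives
\[
\|L(x(s))(x(\theta(T_2))-\hat x)\|\le 3(1+\eta)\|F(x(s))-y\|,
\]
and the cross term is bounded by $3(1+\eta)\int_{\theta(T_1)}^{\theta(T_2)}\|F(x(s))-y\|^2\,ds$, which \emph{is} the tail of a convergent integral by \eqref{Fxd0}. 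A second, entirely analogous estimate (using $\theta(T)\le T$ and the same minimality) shows $D_{\xi(\theta(T))}\Theta(x(\theta(T)),x(T))\to 0$, hence $\|x(T)-x(\theta(T))\|\to 0$, and the triangle inequality yields $x(T)\to\bar x$. Your argument becomes correct once you replace the arbitrary pair $s<t$ by this minimizing-residual device; the remaining steps (identification of $\bar x$ as a solution via continuity of $F$, membership in $B_{2\rho}(x_0)\cap\D(\Theta)$) are fine.
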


\begin{proof}
We define function $\theta(T)$  such that
  \[
\|F(x(\theta(T)))-y\| = \min_{0\leq t\leq T} \|F(x(t))-y\|\,.
\]
It is obvious that the function $\theta(T)$ is continuous, monotonically nondecreasing and can be bounded by $T$. Moreover $\theta(T)\rightarrow\infty$ as $T\rightarrow\infty$. First, we prove the sequence $x(\theta(T))$ converges as $T\rightarrow\infty$. To this end, we will prove that for any $\varepsilon>0$, there exists $T_0>0$, such that for arbitrary $T_2>T_1>T$, there holds
\[
D_{\xi(\theta(T_1))}\Theta(x(\theta(T_2)), x(\theta(T_1)))< \varepsilon\,.
\]
Let $\hat x$ be any solution of \eqref{1.1} satisfying $D_{\xi_0}\Theta(\hat x,x_0)\leq c_0\rho^p$, referring to \eqref{2.1}, we can rewrite the term $D_{\xi(\theta(T_1))}\Theta(x(\theta(T_2)), x(\theta(T_1)))$ as
\begin{equation}\label{rewrite}
  D_{\xi(\theta(T_2))}\Theta(\hat{x}, x(\theta(T_2))) -D_{\theta(T_1)}\Theta(\hat{x}, \theta(T_1))
+\l \xi(\theta(T_1))-\xi(\theta(T_2)), x(\theta(T_2))-\hat{x}\r\,.
\end{equation}
Since $\{D_{\xi(T)}\Theta(\hat x, x(T))\}$ is strictly decreasing for $T>0$, there exists a constant $a\geq 0 $ and $\overline{T}>0$, such that for all $T>\overline{T}$, the first and second terms can be bounded as
\[a - \frac{\varepsilon}{3}< D_{\xi(\theta(T))}\Theta(\hat{x}, x(\theta(T))) <  a + \frac{\varepsilon}{3}\,.\]
For the third term of \eqref{rewrite}, the fundamental theorem of calculus provides
\begin{equation}
  \begin{aligned}
\left|\l \xi(\theta(T_1))-\xi(\theta(T_2)), x(\theta(T_2))-\hat{x}\r\right|
&\leq \left|\left\l\int_{\theta(T_1)}^{\theta(T_2)} L(x(s))^*(y-F(x(s)))ds,x(\theta(T_2))-\hat x\right\r\right|\\
&\le \int_{\theta(T_1)}^{\theta(T_2)}  \|y-F(x(s))\|\|L(x(s))(x(\theta(T_2))-\hat x)\|ds\,.
\end{aligned}
  \end{equation}
Applying  Assumption \eqref{A2} (c), and the  definition of the function $\theta(T)$, it is easy to obtain
  \begin{equation}
  \begin{aligned}
 \|L(x(s))(x(\theta(T_2))-\hat x)\|&= \|L(x(s))(x(\theta(T_2))-x(s))\|+ \|L(x(s))(x(s)-\hat x)\|\\
 &\leq3(1+\eta)\|y-F(x(s))\|\,.
\end{aligned}
  \end{equation}
Therefore, combining the property $\lim_{T\to \infty} \|F(x(T))-y\| = 0$, there exists a constant (still denote as) $\overline{T}$, for $T_2>T_1>\overline{T}$,
\begin{equation}
  \begin{aligned}
\left|\l \xi(\theta(T_1))-\xi(\theta(T_2)), x(\theta(T_2))-\hat{x}\r\right|
\leq 3(1+\eta)\int_{\theta(T_1)}^{\theta(T_2)} \|y-F(x(s))\|^2 ds
 < \frac{\varepsilon}{3}\,,
\end{aligned}
  \end{equation}
and consequently,
\begin{equation*}
D_{\xi(\theta(T_1))}\Theta(x(\theta(T_2)), x(\theta(T_1)))< (a+\frac{\varepsilon}{3})-(a-\frac{\varepsilon}{3}) +\frac{\varepsilon}{3} =\varepsilon\,.
\end{equation*}
The application of the $p$-convexity of  $\Theta$ implies $\lim_{T\to \infty} x(\theta(T))$ exists, we denote it as $\bar x$.

Similarly, we consider
 \begin{equation}
  \begin{aligned}
 &D_{\xi(\theta(T))}\Theta(x(\theta(T)), x(T))\\
 & =D_{\xi(\theta(T))}\Theta(\hat{x}, x(\theta(T))) -D_{\xi(T)}\Theta(\hat{x}, x(T))
+\l \xi(T)-\xi(\theta(T)), x(\theta(T))-\hat{x}\r
\end{aligned}
  \end{equation}
Since $\theta(T)\leq T$ and $ \|F(x(\theta(T)))-y\|\leq \|F(x(T))-y\|$, we have
 \begin{equation}
  \begin{aligned}
\left|\l \xi(T)-\xi(\theta(T)), x(\theta(T))-\hat{x}\r\right|
&\le \left|\left\l\int_{\theta(T)}^{T} L(x(s))^*(F(x(s))-y)ds,x(\theta(T))-\hat x\right\r\right|\\
&\le \int_{\theta(T)}^{T}  \|F(x(s))-y\|\|L(x(s))(x(\theta(T))-\hat x)\|ds\\
&\le 3(1+\eta)\int_{\theta(T)}^{T}\|y-F(x(s))\|^2 ds.
\end{aligned}
  \end{equation}
The same technique provides
\begin{equation*}
\lim_{T\to\infty}\left|\l \xi(\theta(T))-\xi(T), x(\theta(T))-\hat{x}\r\right| = 0
\end{equation*}
and consequently,
\begin{equation*}
\lim_{T\to \infty}D_{\xi(\theta(T))}\Theta(x(\theta(T)), x(T))\to 0
\end{equation*}
and $\lim_{T\to \infty}\| x(T)-x(\theta(T))\| = 0$. The application of triangle inequality gives
  \[
  \|x(T)-\bar x\| \leq \|x(\theta(T))-\bar x\|+\| x(T)-x(\theta(T))\|,
 \]
therefore,
  \[
 \lim_{T\to \infty} x(T) =\bar x.
  \]
By the weakly compactness of $F$ and continuity of $x(T)$, we can conclude $\bar x$ is a solution to \eqref{1.1} and $\bar x\in B_{2\rho}(x_0)\bigcap D(\Theta)$.
  \end{proof}

Finally, we perform the
analysis from the H\"{o}lder continuity of the inverse mapping instead of the source and non-linearity conditions and obtain the convergence rates.
\begin{assumption}\label{Holder}
Assume the H\"{o}lder type conditional stability, i.e., denote the $\nu\in[\frac{p}{2},p]$, there exists a constant $R_F$ such that
\begin{equation*}
D_{\bar\xi}\Theta(x,\bar x)\leq R_F \|F(x)-F(\bar x)\|^\nu,\quad \forall x,\bar x\in B_{2\rho}(x_0), \bar x\in \mathcal{D}(\partial\Theta), \bar\xi\in\partial\Theta(\bar x)\,.
\end{equation*}
\end{assumption}
\begin{theorem}\label{regularization}
Let Assumption \ref{A2}, in which the tangential cone condition is replaced by Lipschitz continuity
  of $F'$
     \begin{equation}\label{LipF}
    \|F'(x)-F'(\bar x)\|\leq L'\|x-\bar x\|,\quad x,\bar x\in B_{2\rho}(x_0).
    \end{equation}
 and Assumption \ref{Holder} hold true. Assume $\|F(x_0)-y^\d\|>\tau \d$, let $(\xi^\d(T),x^\d(T))$  be the solution of \eqref{asymp} with $L(x)=F'(x), $ where $T=T^*$ is the solution of \eqref{h}. Then, there is a solution $\bar x$  of \eqref{1.1} in $B_{2\rho}(x_0)$, there holds
\begin{equation*}
D_{\xi^\d(T^*)}^\d\Theta(\bar x,x^\d(T^*))\to 0, \quad \mbox{as } \d\rightarrow 0\,,
\end{equation*}
and the convergence rate is given by
\begin{equation*}
D_{\xi^\d(T^*)}^\d\Theta(\bar x,x^\d(T^*))\leq R_F(\tau+1)^\nu\d^\nu\,.
\end{equation*}
\end{theorem}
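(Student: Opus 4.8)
The plan is to reduce the statement to the results of the preceding subsection; the only real work is to check that, when the tangential cone condition in Assumption~\ref{A2}(d) is replaced by the Lipschitz estimate \eqref{LipF}, the objects $T^*$, $x^\d(T^*)$ and $\bar x$ remain well defined, after which the asserted rate is a one-line consequence of the discrepancy principle \eqref{h} and Assumption~\ref{Holder}.

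First I would show that \eqref{LipF}, together with Assumption~\ref{Holder}, yields a tangential-cone-type inequality on $B_{2\rho}(x_0)$. Indeed \eqref{LipF} gives the Taylor bound $\|F(x)-F(\bar x)-F'(\bar x)(x-\bar x)\|\le\tfrac{L'}{2}\|x-\bar x\|^2$ for $x\in B_{2\rho}(x_0)$ and $\bar x\in B_{2\rho}(x_0)\cap\D(\p\Theta)$, while $p$-convexity of $\Theta$ (Proposition~\ref{Prop_pre}(i)) and Assumption~\ref{Holder} give $c_0\|x-\bar x\|^p\le D_{\bar\xi}\Theta(x,\bar x)\le R_F\|F(x)-F(\bar x)\|^\nu$, hence $\|x-\bar x\|^2\le (R_F/c_0)^{2/p}\|F(x)-F(\bar x)\|^{2\nu/p}$. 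Since $\nu\ge p/2$ we have $2\nu/p\ge 1$, so factoring out one power of the residual and using a uniform bound $M$ for $\|F(x)-F(\bar x)\|$ on $B_{2\rho}(x_0)$ (continuity of $F$) gives
\[
\|F(x)-F(\bar x)-F'(\bar x)(x-\bar x)\|\le\eta\,\|F(x)-F(\bar x)\|,\qquad \eta:=\tfrac{L'}{2}\Big(\tfrac{R_F}{c_0}\Big)^{2/p}M^{\,2\nu/p-1},
\]
with $\eta<1$ provided $\rho$ and the initial residual are small enough. With this in place, Assumption~\ref{A2} holds with $L(x)=F'(x)$, so Propositions~\ref{prop2}, \ref{prop3} and Theorem~\ref{th3.5} apply verbatim: $T^*$ solving \eqref{h} exists, $x^\d(T^*)\in B_{2\rho}(x_0)$, and in the noise-free limit Theorem~\ref{th3.5} furnishes a solution $\bar x\in B_{2\rho}(x_0)\cap\D(\Theta)$ of \eqref{1.1}. (Along the flow, $F'$ is evaluated only at $x^\d(t)=\nabla\Theta^*(\xi^\d(t))\in\D(\p\Theta)$, so the conditional stability of Assumption~\ref{Holder} is used only at admissible base points, and no density argument in $\D(\Theta)$ is needed.)

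Next is the core estimate. Since $x^\d(T^*)=\nabla\Theta^*(\xi^\d(T^*))$, Proposition~\ref{Prop_pre}(iii) yields $\xi^\d(T^*)\in\p\Theta(x^\d(T^*))$, so $x^\d(T^*)\in\D(\p\Theta)$; moreover $\bar x,x^\d(T^*)\in B_{2\rho}(x_0)$ and $F(\bar x)=y$. Applying Assumption~\ref{Holder} with first argument $\bar x$ and base point $x^\d(T^*)$,
\[
D_{\xi^\d(T^*)}\Theta(\bar x,x^\d(T^*))\le R_F\,\|F(\bar x)-F(x^\d(T^*))\|^\nu=R_F\,\|y-F(x^\d(T^*))\|^\nu .
\]
By the triangle inequality, \eqref{1.2} and the discrepancy principle \eqref{h},
\[
\|y-F(x^\d(T^*))\|\le\|y-y^\d\|+\|y^\d-F(x^\d(T^*))\|\le\d+\tau\d=(\tau+1)\d ,
\]
whence $D_{\xi^\d(T^*)}\Theta(\bar x,x^\d(T^*))\le R_F(\tau+1)^\nu\d^\nu$, which in particular tends to $0$ as $\d\to 0$.

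The last two displays are routine; the step requiring care is the first paragraph, namely that \eqref{LipF} can stand in for the tangential cone condition. The delicate point there is the bookkeeping in $\eta<1$ — a smallness hypothesis on $\rho$ and on $\|F(x_0)-y^\d\|$ — which (as is standard for Landweber-type schemes) is what keeps the orbit inside $B_{2\rho}(x_0)$ and preserves the monotonicity of $\varphi$ on $[0,T^*]$; once that is arranged, the argument above goes through unchanged.
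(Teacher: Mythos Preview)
Your proposal is correct and follows essentially the same route as the paper: first recover a tangential cone inequality from the Lipschitz bound \eqref{LipF} together with $p$-convexity and Assumption~\ref{Holder}, so that the earlier results (Propositions~\ref{prop2}, \ref{prop3}) apply and $T^*$, $x^\d(T^*)$, $\bar x$ are well defined; then obtain the rate in one line by combining Assumption~\ref{Holder} with the discrepancy principle \eqref{h}. The only cosmetic difference is in how the tangential cone constant is controlled: the paper splits $\|x^\d(T)-\hat x\|^2=\|x^\d(T)-\hat x\|^{2-p/\nu}\,\|x^\d(T)-\hat x\|^{p/\nu}$, bounds the first factor by $(4\rho)^{2-p/\nu}$ (using $\nu\le p$) and the second via $p$-convexity and Assumption~\ref{Holder}, arriving at $\eta=\tfrac{L'}{2}(4\rho)^{2-p/\nu}(R_F/c_0)^{1/\nu}$ and then \emph{choosing} $\rho$ to make $\eta<1$; you instead pass through $\|x-\bar x\|^2\le(R_F/c_0)^{2/p}\|F(x)-F(\bar x)\|^{2\nu/p}$ and absorb the excess power of the residual into a sup-bound $M$. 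The paper's bookkeeping is slightly cleaner because the smallness condition is explicit in $\rho$ alone, but the substance is identical.
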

\begin{proof}
Assume $x^\d(T)\in B_{2\rho}(x_0)$ for $T<T^*$, we have
 \begin{equation}\label{instead}
  \begin{aligned}
\|y-F(x^\d(T))-F'(x^\d(T))(\hat x-x^\d(T))\|
&\leq\frac{L}{2}\|x^\d(T)-\hat x\|^2\nonumber\\
&\leq\frac{L}{2}\|x^\d(T)-\hat x\|^{2-\frac{p}{\nu}}\|x^\d(T)-\hat x\|^\frac{p}{\nu}\nonumber\\
&\leq\frac{L}{2}(4\rho)^{2-\frac{p}{\nu}}\left(\frac{1}{c_0}D_{\xi^\d(T_n)}\Theta(\hat x, x^\d(T_n))\right)^\frac{1}{\nu}\nonumber\\
&\leq\frac{L}{2}(4\rho)^{2-\frac{p}{\nu}}\left(\frac{R_F}{c_0}\right)^{\frac{1}{\nu}}\|y-F(x^\d(T_n))\|\,.
\end{aligned}
  \end{equation}
Denote
\begin{equation}\label{eta}
\rho = \frac{1}{4}\left(\frac{2\eta}{L}\right)^{\frac{\nu}{2\nu-p}}\left(\frac{c_0}{R_F}\right)^{\frac{1}{2\nu-p}}
\end{equation}
then Assumption \ref{Holder} with $p/2\leq\nu\leq p$  and \eqref{LipF}  imply the general tangential cone condition.
The conclusion is directly obtained by utilizing the conditional stability in Assumption \ref{Holder},
\begin{equation}
  \begin{aligned}
D_{\xi^\d(T^*)}^\d\Theta(\bar x,x^\d(T^*))&\leq R_F\|y-F(x^\d(T^*))\|^\nu\\
&\leq R_F\left(\d+\|y^\d-F(x^\d(T^*))\|\right)^\nu\\
&\leq R_F(\tau+1)^\nu\d^\nu\,.
\end{aligned}
  \end{equation}

\end{proof}

\begin{remark}
Note that one obtains for finite $T<\infty$, the existence of a unique solution $\xi(t) \in C(0, T;\X)$
of \eqref{asymp}  if the operator $G(\xi) = F'(\nabla\Theta^*(\xi))^*[y^\d - F(\nabla\Theta^*(\xi))]$ is locally Lipschitz continuous in $\X$   \cite{db02}.
Actually, for $(\xi_1,x_1)$ and $(\xi_2,x_2)$ with $x_i = \nabla\Theta^*(\xi_i)$, $i=1,2$, and $x_1,x_2 \in B_{2\rho}(x_0)$, we have
\begin{equation*}
\begin{aligned}
G(\xi_1)-G(\xi_2)&= F'(x_1)^*(y^\d - F(x_1))-F'(x_2)^*(y^\d - F(x_2))\\
 &= \left(F'(x_1)^*-F'(x_2)^*\right)\left(y^\d-F(x_1)\right)
 + F'(x_2)^*\left(F(x_2)-F(x_1) \right)dt,
\end{aligned}
\end{equation*}
Applying the  Lipschitz continuity of $F'$ \eqref{LipF} and the bounded property of $F'$, then we have
\begin{equation*}
\begin{aligned}
\|G(\xi_1)-G(\xi_2)\|&\leq  \|\left(F'(x_1)^*-F'(x_2)^*\right)\left(y^\d-F(x_1)\right)\|
 \| F'(x_2)^*\left(F(x_2)-F(x_1) \right)\|\\
& \leq  L'\|y^\d-F(x_1)\|\|x_1-x_2\|+C_0^2\|x_1-x_2\|
\end{aligned}
\end{equation*}
Take the $\Theta$ as strongly convex (p=2) then
\begin{equation*}
\|x_1-x_2\|=\|\nabla \Theta^*(\xi_1)-\nabla \Theta^*(\xi_2) \|\leq \frac{\|\xi_1-\xi_2\|}{2c_0}.
\quad \forall \xi_1, \xi_2\in \X.
\end{equation*}
Combining above estimates, we obtain that  $G(\xi)$ is locally Lipschitz continuous in $\X$, which implies that
the existence of a unique solution $\xi(t) \in C(0, T; \X)$
of \eqref{asymp}.
\end{remark}

\section{Discretization of asymptotical method}

In this section we recall the Runge-Kutta (RK) type iterative methods which are regarded as the discretization of asymptotical method.

RK methods are a class of one-step methods for the numerical solutions of the initial value problems for ODEs,
\[
\frac{dw(t)}{dt} = \Psi(t,w(t)), \quad t>0, \quad w(t) = w_0\,,
\]
with given $\Psi:[0,\infty]\times X \to X$ and $w_0\in X$.

A RK method of $s$-stages provides approximations $w_n$ to $w(t_n)$ with $t_n = \sum_{k=1}^n \triangle t_k$ such that

  \begin{equation}\label{RK1}
  \begin{aligned}
&w_{n+1} = w_{n}+\triangle t_n \sum_{i=1}^s b_i\Psi (t+c_i\triangle t_n,k_i),\\
&k_i = w_n + \triangle t_n \sum_{j=1}^s a_{i,j} \Psi(t+c_j \triangle t_n ,k_j).
\end{aligned}
  \end{equation}
The given coefficients $A = (a_{ij})\in \mathbb{R}^{s\times s}$, $b= (b_1,b_2,\cdots,b_s)^T$ and $c = (c_1,c_2,\cdots,c_s)^T$ determine the particular method. The method is called {\it explicit} if $A$ is strictly lower triangular matrix. Otherwise, the method is called {\it implicit}, since linear or nonlinear algebraic equations have to be solved to calculate $k_i, i=1,\cdots, s$. Usually, the specific instance of RK methods can be presented by Butcher tableau, see Table 1 or by triple $(A,b,c)$  in \cite{db02}. We only consider consistent $s-$stage RK method, i.e., $\sum_{i=1}^s b_i=1$. However, an inconsistent $s-$stage RK method can be considered in the same manner.
 \begin{table}
 \caption{The general Butcher tableau of RK method. }
\begin{center}
\begin{tabular}
{c|c}
c &A \\
\hline
       &  $b^{T}$\\
\end{tabular}
\qquad\qquad
\begin{tabular}{c|ccc}
        $c_1$& $a_{11}$ &$\cdots$ &$a_{1s}$\\
       $\vdots$&$\vdots$ &$\vdots$ &$\vdots$\\
        $c_s$& $a_{s1}$ &$\cdots$ &$a_{ss}$\\
 \hline
       \ &$b_{1}$ &$\cdots$ &$b_{s}$\\
\end{tabular}
\end{center}
\label{}
\end{table}

A RK method applied to autonomous ODE \eqref{asymp}, that is,
\begin{equation*}
\Psi(\xi^\d(t)) = L(x^\d(t))^*(y^\d-F(x^\d(t)))\,,\quad x^\d(t) = \nabla\Theta^*(\xi^\d(t))\,.
\end{equation*}
Then, it is consequently
  \begin{equation}
  \begin{aligned}
&\xi_{n+1}^\d = \xi_{n}^\d+\triangle t_n\sum_{i=1}^s b_iL(\nabla\Theta^*(k_i))^*(y^\d-F(\nabla\Theta^*(k_i)))\,,\\
&k_i = \xi_{n}^\d+ \triangle t_n\sum_{j=1}^s a_{ij}L(\nabla\Theta^*(k_j))^*(y^\d - F(\nabla\Theta^*(k_j)))\,.
\end{aligned}
  \end{equation}
Use the explicit Euler method, i.e., $s=1$, $c=0$, $b=1$, $A=0$, it follow that
 \begin{equation}
  \begin{aligned}
&\xi_{n+1}^\d = \xi_{n}^\d + \triangle t_n L(x_{n}^\d)^*(y^\d- F(x_{n}^\d)),\\
&x_{n+1}^\d  = \nabla\Theta(\xi_{n+1}^\d).
\end{aligned}
  \end{equation}
It is a Landweber-type iteration. Use the implicit Euler method with $s=1$, $c=1$, $b=1$ and $A=1$, we have the implicit Landweber-type iteration
 \begin{equation}
  \begin{aligned}
\xi_{n+1}^\d&=\xi_n^\d+\triangle t_n L(x_{n+1}^\d)^*(F(x_{n+1}^\d)-y^\d),\\
x_{n+1}^\d &= \nabla \Theta^*( \xi_{n+1}^\d).
\end{aligned}
  \end{equation}
For $s=2$, the explicit $2-$stage RK methods provides
 \begin{equation}
  \begin{aligned}
&k_2 = \xi^\d_n + \triangle t_n a_{21}L(x_n^\d)^*(y^\d -F(x_n^\d)),\\
&z_n^\d = \nabla\Theta^*(k_2),\\
&\xi_{n+1}^\d = \xi_{n}+\triangle t_n \left(b_1(y^\d-F(x_n^\d))+b_2L(z_n^\d)^*(y^\d-F(z_n^\d))\right), \\
&x_{n+1}^\d = \nabla \Theta^*( \xi_{n+1}^\d)\,,
\end{aligned}
  \end{equation}
in which the coefficients should satisfy
\begin{equation}
b_1+b_2=1\,,\quad a_{21}b_2 = \frac{1}{2}\,.
\end{equation}

\section*{\bf Acknowledgements}
 The work of M Zhong is supported by the National Natural Science Foundation of China (No. 11871149) and supported by Zhishan Youth Scholar Program of SEU. 
The work of W Wang is supported by the National Natural Science Foundation of China (No. 12071184).

\end{document}